\DeclareMathOperator{\Aut}{Aut}
\newcommand{\col}{\colon}
\newcommand{\disp}{\displaystyle}
\newcommand{\scC}{\mathcal{C}}
\providecommand{\To}{\longrightarrow}
\providecommand{\cal}[1]{\mathcal{#1}}
\providecommand{\sma}{\wedge}
\newcommand{\n}{{\underline{n}}}
\newcommand{\inv}{{-1}}
\newcommand{\spectra}{\mathcal{S}}
\newcommand{\wre}{\mathord{\wr}}
\newtheorem{thrm}{Theorem}
\newtheorem{lemma}[thrm]{Lemma}
\theoremstyle{definition}
\newtheorem{defn}[thrm]{Definition}
\theoremstyle{remark}
\newtheorem{ex}{Example}
\newtheorem{remark}[thrm]{Remark}
\let\c@equation\c@thrm
\begin{document}

\title{A comparison of norm maps}
\author[Anna Marie Bohmann]{Anna Marie Bohmann\\
with appendix by Anna Marie Bohmann and Emily Riehl}

\subjclass[2010]{Primary 55P91, 55P42; Secondary 18D30}

\hyphenation{Grothen-dieck}

\begin{abstract}We present a spectrum-level version of the norm map in equivariant homotopy theory based on the algebraic construction in \cite{GM1997}.  We show that this new norm map is same as the construction in \cite{HHR2009}.  Our comparison of the two norm maps gives a conceptual understanding of the choices inherent in the definition of the multiplicative norm map.
\end{abstract}
\maketitle

This paper serves to contextualize and explain the construction of the norm maps in equivariant stable homotopy theory.  The norm map should be thought of as a sort of multiplicative induction functor that takes $H$--equivariant spectra to $G$--equivariant spectra, where $H$ is a finite index subgroup of a compact Lie group $G$.  We compare the definition of the norm map used in the recent solution to the Kervaire invariant one problem \cite{HHR2009} to the norm map implicit in earlier work of Greenlees and May \cite{GM1997}.  Greenlees and May define an algebraic norm map on homotopy groups, whereas Hill, Hopkins and Ravenel have a spectrum-level construction that agrees with the Greenlees--May construction on the algebraic level.  We give a spectrum-level version of the Greenlees--May construction and compare the two constructions at the spectrum level. Our comparison gives a new conceptual understanding of the choices inherent in the definition of the multiplicative norm  map.

  First, note that we will use orthogonal spectra and push all of the choice of universe issues into choice of model structure.  It follows from \cite[Theorem V.1.7]{MM2002} that we can thus get away with thinking of $G$--spectra as simply $G$--objects in the category of spectra.  In turn, we will think of these as covariant functors from the one-object category $G$ into the category $\cal{S}$ of spectra.  

Let $G$ be a compact Lie group and $H< G$ be a subgroup such that $[G:H]=n$.  Let $B_{G/H}G$ be the translation groupoid of $G$ acting on $G/H$. That is, $B_{G/H}G$ has objects $x\in G/H$ and morphisms $x\stackrel{g}{\To}gx$ for all $g\in G$.
If we think of $H$ as a one-object category,  we have an inclusion of groupoids $\iota\col H\hookrightarrow B_{G/H}G$ given by sending the object $\ast_H$ of $H$ to the identity coset.  Since $B_{G/H}G$ is connected and the endomorphisms of $eH\in G/H$ are just $H$, this is an equivalence of categories.
\begin{defn}  \label{HHRdefn}
The Hill--Hopkins--Ravenel norm map is the following composite.
\[
\xymatrix{
\spectra^H \ar[r]^-{\simeq}\ar[dr]_-{N_H^G} &\spectra^{B_{G/H}G} \ar[d]^-{p_*^{\sma}}\\
 & \spectra^G }
\]
\end{defn}
Here the map $\spectra^H\to \spectra^{B_{G/H}G}$ is given by choice of an inverse equivalence to the inclusion $\iota\colon H\to B_{G/H}G$.  The map $p_*^{\sma}$ is the ``monoidal pushforward'' of \cite[page 109]{HHR2009}.  For a functor $X\in \spectra^{B_{G/H}G}$, $p_*^\sma$ is defined at the object $\ast_G$ of $G$ by
\begin{equation} \label{HHRindexedobj}
(p_*^{\sma}X)(\ast_G)=\bigwedge_{x \in G/H}X(x).
\end{equation}
The image of a morphism $g\in G$ is given by the smash product of the images of  the morphisms in $B_{G/H}G$ induced by the action of $g$ on the elements of $G/H$.  If we denote by $g_x$ the map $x\stackrel{g}{\To} gx$ in $B_{G/H}G$, we can write $(p_*^\sma X)(g)$ explicitly as the map
\begin{equation} \label{HHRindexedmorph}
\bigwedge_{x\in G/H}X(x)\xrightarrow{\sma X(g_x)} \bigwedge_{x\in G/H} X(gx).
\end{equation}

In \cite{GM1997}, Greenlees and May only define an ``algebraic'' version of the norm on equivariant homotopy or cohomology groups.  However, implicit in their discussion of algebraic norm maps is a definition of norm maps on the spectrum level.   This norm map again can be described as a composite of functors as follows.
\begin{defn} \label{GMdefn}
The spectrum-level version of the Greenlees--May norm map is the composite
\[\xymatrix{ \spectra^H \ar[r]^-{\sma^n} \ar[dr]_-{N_H^G} & \spectra^{\Sigma_n\wre H} \ar[d]^-{\alpha^*} \\
 & \spectra^G}
\]
\end{defn}
Here $\Sigma_n\wre H$ is the wreath product of $H$ and the symmetric group $\Sigma_n$, and the horizontal map corresponds to taking an $n$-fold smash power of an $H$-spectrum. We will be more explicit about this construction in Lemma \ref{defnindexedsmash}.  The vertical map $\alpha^*$ is the pullback along a homomorphism $\alpha\colon G\to \Sigma_n\wre H$.  Concretely, we get our homomorphism $\alpha$ by choosing a transversal to $H$ in $G$.  This construction will in fact be key in what follows, so we give details now.  A set of coset representatives  $\{t_1,\dotsc, t_n\}$ of $G/H$ defines a homomorphism $\alpha\colon G\to \Sigma_n\wre H$ by the formula
\begin{equation}\label{wreathhomomorphism}
 \alpha(g)=(\sigma_g,h_1(g),\dotsc,h_n(g))
\end{equation}
where $\sigma_g\in \Sigma_n$ and $h_i(g)\in H$ are defined by the equation $gt_i=t_{\sigma_g(i)}h_i(g)$. In other words, $\sigma_g$ is the permutation determined by the action of $g$ on the cosets of $G/H$, and the $h_i(g)$ give the $H$-action induced on each coset.  

\begin{remark}
The construction of Definition \ref{GMdefn}, with its use of the wreath product $\Sigma_n\wre H$, is nicely parallel to the construction of the norm map in group cohomology given by Evens \cite[Chapter 5]{Evens1991}.  In fact, our homomorphism $\alpha$ is his ``monomial representation'' and Evens's notion of the ``tensor induced module''---a monoidal induction from $H$--modules to $G$--modules---is precisely our definition of the Greenlees--May norm map where the category $\spectra$ is replaced by the category of modules over a ground ring.
\end{remark}

The heart of our comparison of the construction of the norm map in Definitions \ref{HHRdefn} and \ref{GMdefn} is the following diagram.
\begin{equation}\label{maindiagram}
\xymatrix{
\spectra^H \ar[d]_{c} \ar[dr]^-{\kappa^*}_-{\simeq}\\
\spectra^{B_{\n}\Sigma_n\times H} \ar[d]_{\n^\sma}\ar[r]^{\beta^*} & \spectra^{B_{G/H}G}\ar[d]^{p_*^\sma}\\
\spectra^{\Sigma_n\wre H}\ar[r]^{\alpha^*} &\spectra^G
}
\end{equation}

In this diagram the left vertical composite is a factorization of the $n$-fold smash product construction in the Greenlees--May norm map, so that the composite down the left side is the Greenlees--May norm and the composite along the right side is the Hill--Hopkins--Ravenel norm.  We think of the ``norm'' construction as having two parts: ``choosing a transversal'' and ``indexed smash product.'' By ``indexed smash product,'' we mean a construction defined by an (unordered) $n$-fold smash product, as in equations (\ref{HHRindexedobj}) and (\ref{HHRindexedmorph}).  This concept is formalized in Appendix \ref{sec:unify-fram-index}.  In Diagram (\ref{maindiagram}), all the maps from the left column to the right can be specified by choosing a transversal, whereas the lower two vertical maps are forms of indexed smash products.  Thus, the Greenlees--May norm is given by taking an indexed smash product and then choosing a transversal, whereas the Hill--Hopkins--Ravenel norm map is given by first choosing a transversal and then taking indexed smash products.  The fact that these two constructions agree---that this diagram commutes---essentially says that one can perform these constructions in either order.  We devote the rest of this section to proving this compatibility.

\begin{thrm}\label{maintheorem} For compatible choices of homomorphism $\alpha\col G\to \Sigma_n\wre H$ and equivalence of categories $B_{G/H}G\to H$, Diagram (\ref{maindiagram}) commutes.  Thus the Hill--Hopkins--Ravenel norm map and the Greenlees--May norm maps agree.
\end{thrm}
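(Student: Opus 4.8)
The plan is to deduce the agreement of the two norm maps from the commutativity of the two constituent pieces of Diagram (\ref{maindiagram})---the upper triangle $\beta^*\circ c=\kappa^*$ and the lower square $p_*^\sma\circ\beta^*=\alpha^*\circ\n^\sma$---after which a short diagram chase gives $p_*^\sma\circ\kappa^*=p_*^\sma\circ\beta^*\circ c=\alpha^*\circ\n^\sma\circ c=\alpha^*\circ\sma^n$, i.e. the Hill--Hopkins--Ravenel norm equals the Greenlees--May norm. The first thing I would do is pin down the meaning of ``compatible choices'': fix a transversal $\{t_1,\dotsc,t_n\}$ with $t_1=e$, let this transversal define $\alpha$ by (\ref{wreathhomomorphism}), and let the \emph{same} transversal define a functor $\beta\col B_{G/H}G\to B_\n\Sigma_n\times H$ by $t_iH\mapsto(i,\ast_H)$ on objects and, on morphisms, by sending $g_x\col t_iH\to gt_iH$ to the pair $(\,i\stackrel{\sigma_g}{\To}\sigma_g(i)\,,\,h_i(g)\,)$. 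The equation $gt_i=t_{\sigma_g(i)}h_i(g)$ makes $\beta$ a functor (functoriality is the cocycle identity $h_i(g'g)=h_{\sigma_g(i)}(g')\,h_i(g)$, which falls out of associativity in $G$), and I would then \emph{define} the inverse equivalence $\kappa$ of Definition \ref{HHRdefn} to be $\kappa:=\mathrm{pr}\circ\beta$, where $\mathrm{pr}\col B_\n\Sigma_n\times H\to H$ is the projection.

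The second step is to recognize each functor in the diagram as either a restriction or an indexed smash product. The key observation is that $c$ is itself a restriction: $c=\mathrm{pr}^*$, since pulling an $H$--spectrum $X$ back along $\mathrm{pr}$ yields the functor that is constantly $X$ on the objects of $\n$, carries the given $H$--action, and sends every $\Sigma_n$--morphism to the identity---exactly the input on which $\n^\sma$ computes the $n$--fold smash power, so that $\n^\sma\circ c=\sma^n$ as asserted in the text. Likewise $p_*^\sma$ and $\n^\sma$ are the monoidal pushforwards along the collapse functors $B_{G/H}G\to G$ and $B_\n\Sigma_n\times H\to \Sigma_n\wre H$. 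With $c=\mathrm{pr}^*$ the upper triangle is immediate: $\beta^*\circ c=\beta^*\circ\mathrm{pr}^*=(\mathrm{pr}\circ\beta)^*=\kappa^*$. It then remains only to check that this $\kappa$ really is an inverse equivalence to $\iota$; since $\iota(h)=h_{eH}$ and $t_1=e$ forces $\sigma_h(1)=1$ and $h_1(h)=h$ for $h\in H$, one gets $\kappa\circ\iota=\id_H$, and connectedness of $B_{G/H}G$ supplies the natural isomorphism $\iota\circ\kappa\cong\id$.

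The main work, and the step I expect to be the real obstacle, is the lower square $p_*^\sma\circ\beta^*=\alpha^*\circ\n^\sma$: this is a Beck--Chevalley compatibility asserting that restriction and monoidal pushforward commute across the square of groupoids determined by $\alpha$, $\beta$, and the two collapse maps. I would prove it by evaluating both composites on an arbitrary $Y\in\spectra^{B_\n\Sigma_n\times H}$ at the object $\ast_G$. Using the bijection $G/H\cong\n$, $t_iH\leftrightarrow i$, both sides have underlying object $\bigwedge_{i\in\n}Y(i,\ast_H)$; the content is that the two $G$--actions coincide. On the pushforward side the action of $g$ is the smash of the maps $\beta^*Y(g_x)=Y(\,\sigma_g\col i\to\sigma_g(i),\,h_i(g)\,)$ by (\ref{HHRindexedmorph}), while on the restriction side $g$ acts through $\alpha(g)=(\sigma_g,h_1(g),\dotsc,h_n(g))$, permuting the smash factors by $\sigma_g$ and acting on the $i$--th factor by $h_i(g)$. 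These agree factor by factor precisely because $\alpha$ and $\beta$ are built from the same transversal. The one genuinely delicate point is coherence: both constructions are \emph{unordered} smash products, so matching the factors under $G/H\cong\n$ requires the bookkeeping of the indexed-smash formalism, and here I would invoke the framework of Appendix \ref{sec:unify-fram-index} to make the identification of factors and the compatibility of the symmetry isomorphisms rigorous, rather than chasing permutations by hand. Combining the triangle and the square as above then yields the theorem.
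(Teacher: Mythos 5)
Your proposal is correct and follows essentially the same route as the paper: build $\alpha$, $\beta$, and $\kappa$ from a single transversal so that the upper triangle reduces to $\kappa=\mathrm{pr}\circ\beta$ (with $c=\mathrm{pr}^*$), and then verify the lower square by evaluating both composites on an arbitrary object at $\ast_G$ and on each morphism $g$, where they agree factor by factor via $g t_i = t_{\sigma_g(i)}h_i(g)$. Your explicit check that $\n^\sma\circ\mathrm{pr}^*=\sma^n$ and your appeal to the appendix for the unordered-smash coherence are, if anything, slightly more careful than the paper's treatment of those points.
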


The proof follows from the following two lemmas.

\begin{lemma}\label{triangledefns} A choice of transversal $\{t_1,\dotsc,t_n\}$ of $H$ in $G$ determines functors $\kappa\col B_{G/H}G\to H$ and $\beta\col B_{G/H}G\to B_{\n}\Sigma_n\times H$ such that the diagram
\[
\xymatrix{ H&\\
B_\n\Sigma_n\times H \ar[u] &B_{G/H}G\ar[l]_{\beta} \ar[ul]_{\kappa}
}
\]
commutes.
\end{lemma}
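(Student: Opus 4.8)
The plan is to use the chosen transversal $\{t_1,\dotsc,t_n\}$ both to coordinatize $B_{G/H}G$ and to write down $\kappa$ and $\beta$ explicitly, so that the commuting triangle reduces to the single fact that $\alpha$ is a homomorphism. Since $\{t_1,\dotsc,t_n\}$ is a transversal, every object $x\in G/H$ is uniquely of the form $t_iH$, giving a bijection between the object set of $B_{G/H}G$ and $\n=\{1,\dotsc,n\}$. A morphism of $B_{G/H}G$ out of $x=t_iH$ is $g_x\col x\To gx$ for $g\in G$, and by the defining equation $gt_i=t_{\sigma_g(i)}h_i(g)$ of \eqref{wreathhomomorphism} its target is $gx=t_{\sigma_g(i)}H$ while the element $h_i(g)=t_{\sigma_g(i)}^{-1}gt_i$ lies in $H$.

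With this bookkeeping I would define the two functors as follows. Let $\kappa\col B_{G/H}G\to H$ send every object to the unique object $\ast_H$ of $H$ and send the morphism $g_x$ with $x=t_iH$ to $h_i(g)\in H$. Let $\beta\col B_{G/H}G\to B_\n\Sigma_n\times H$ send the object $t_iH$ to $(i,\ast_H)$ and send $g_x$ with $x=t_iH$ to the pair $(\sigma_g,h_i(g))$, where $\sigma_g$ now denotes the morphism $i\To\sigma_g(i)$ of $B_\n\Sigma_n$ labeled by the permutation $\sigma_g$ and $h_i(g)$ denotes the endomorphism $\ast_H\To\ast_H$ labeled by that element of $H$. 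On objects this is a morphism $(i,\ast_H)\to(\sigma_g(i),\ast_H)$, as required.

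Next I would verify that $\kappa$ and $\beta$ are functors. Identities are immediate. For composites, given $x=t_iH$ and morphisms $x\xrightarrow{g}gx\xrightarrow{g'}g'gx$, one has $\kappa(g'_{gx})\kappa(g_x)=h_{\sigma_g(i)}(g')\,h_i(g)$ and $\kappa((g'g)_x)=h_i(g'g)$, and similarly for $\beta$ with the extra $\Sigma_n$-coordinates $\sigma_{g'}\sigma_g$ and $\sigma_{g'g}$; here the twisting of the wreath product is absorbed into the fact that the $H$-label $h_{\sigma_g(i)}(g')$ of the second morphism is evaluated at the target index $\sigma_g(i)$, so that composition in the \emph{product} groupoid $B_\n\Sigma_n\times H$ is componentwise. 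Thus functoriality of both maps is exactly the pair of identities $\sigma_{g'g}=\sigma_{g'}\sigma_g$ and $h_i(g'g)=h_{\sigma_g(i)}(g')\,h_i(g)$, which together are precisely the assertion that $\alpha(g'g)=\alpha(g')\alpha(g)$ in $\Sigma_n\wre H$. Since $\alpha$ of \eqref{wreathhomomorphism} is defined from the same transversal and is a homomorphism, these hold. This is the one computation at the heart of the lemma, and the only point genuinely requiring care: one must fix the convention for $\alpha$ and track the order of composition in the wreath product consistently, as this is where a sign- or side-error would hide.

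Finally, the triangle commutes essentially by inspection: the (unlabeled) vertical functor $B_\n\Sigma_n\times H\to H$ is the projection onto the second factor, sending $(i,\ast_H)\mapsto\ast_H$ and $(\sigma_g,h_i(g))\mapsto h_i(g)$, so its composite with $\beta$ agrees with $\kappa$ on both objects and morphisms. This proves the lemma for the compatible choices given by a single transversal. As a remark relevant to Diagram \eqref{maindiagram}, I would also note that $\kappa$ is the inverse equivalence to $\iota$ used in Definition \ref{HHRdefn}: it is fully faithful because for fixed $i,j$ the assignment $g\mapsto t_j^{-1}gt_i$ is a bijection from $\{g : gt_iH=t_jH\}$ onto $H$, and it is trivially essentially surjective, with $\kappa\circ\iota=\id_H$ on the nose once $e$ is taken as the representative of the identity coset.
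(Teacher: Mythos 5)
Your proposal is correct, and its skeleton matches the paper's proof exactly: the same definitions of $\kappa$ (collapsing $\Aut(t_iH)$ to $H$ via $h_i(g)=t_{\sigma_g(i)}^{-1}gt_i$) and of $\beta$ (sending $t_iH\mapsto i$ and $g_i\mapsto({\sigma_g}_i,h_i(g))$), and the same observation that the triangle then commutes by inspection since the vertical map is projection away from $B_\n\Sigma_n$. The one genuine difference is where the functoriality computation lives. The paper verifies functoriality of $\beta$ directly, by the explicit three-line computation $\gamma g t_i=\gamma t_{\sigma_g(i)}h_i(g)=t_{\sigma_\gamma\sigma_g(i)}h_{\sigma_g(i)}(\gamma)h_i(g)$, which yields $\sigma_{\gamma g}=\sigma_\gamma\sigma_g$ and $h_i(\gamma g)=h_{\sigma_g(i)}(\gamma)h_i(g)$; this computation is, in effect, also a proof that $\alpha$ is a homomorphism. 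You instead invert the logic: you observe that, given the wreath-product multiplication rule, these two identities \emph{are} the statement $\alpha(\gamma g)=\alpha(\gamma)\alpha(g)$, and you cite the homomorphism property of $\alpha$ from Equation (\ref{wreathhomomorphism}). This is legitimate, since the paper asserts that property when introducing $\alpha$ (it is the classical monomial representation), and your version has the conceptual merit of making explicit that functoriality of $\kappa$ and $\beta$ and the homomorphism property of $\alpha$ are literally the same fact. The cost is that your proof is not self-contained where the paper's is: the paper never actually proves $\alpha$ is a homomorphism, so a reader demanding full rigor would still need the displayed computation above, which is exactly what the paper supplies. Your closing remark verifying that $\kappa$ is fully faithful and essentially surjective, hence an inverse equivalence to $\iota$, is a correct elaboration of a claim the paper states without proof and is a welcome addition.
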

\begin{proof}
Fix a transversal $\{t_1,\dotsc,t_n\}$ of $H$ in $G$, where $t_1=e$.  This choice determines an inverse to the equivalence of categories $\iota\colon H\to B_{G/H}G$ described above. Explicitly, the inverse $\kappa\colon B_{G/H}G\to H$ is given by identifying $\Aut(t_iH,t_iH)$ with $H$ via $t_i^{-1}Ht_i$.  The transversal also gives a convenient shorthand for labeling the arrows in the $B_{G/H}G$: we henceforth use $g_i\col t_iH\to t_{\sigma_g(i)}H$ to denote the map $g_{t_iH}\col t_i H\to t_{\sigma_g(i)} H$. With this notation, a map $g_{i}\col t_iH\to t_{\sigma_g(i)}H$ in $B_{G/H}G$ is mapped under the functor $\kappa$ to $h_i(g)\colon \ast_H\to \ast_H$. Here we use $\ast_H$ to denote the single object of the category $H$. 

Our choice of transversal also allows us to define the functor $\beta$.  On objects, $\beta(t_iH)=(i,\ast_H)=i$, and on a morphism $g_i\colon t_iH\to t_{\sigma_g(i)}H$ we define $\beta(g_i)=({\sigma_g}_{i},h_i(g))\colon i\to \sigma_g(i)$. We next check that $\beta$ is functorial. Given $g$ and $\gamma$ in $G$, we must check that the diagram
\[
\xymatrix{ \beta(t_iH)\ar[r]^{\beta(g_i)} \ar[dr]_{\beta(\gamma g_i)}& \beta(t_{\sigma_g(i)})\ar[d]^{\beta(\gamma_{\sigma_g(i)})}\\
&\beta(t_{\sigma_{\gamma g}(i)}H)}
\]
commutes in $B_{\n}\Sigma_n$.  The diagonal arrow is the map $(\sigma_{\gamma g}, h_i(\gamma g))\colon i\to \sigma_{\gamma g}(i)$, while the composite around the top right is 
\[(\sigma_\gamma,h_{\sigma_g(i)}(\gamma))\circ(\sigma_g,h_i(g))=(\sigma_\gamma\sigma_g,h_{\sigma_g(i)}(\gamma)h_i(g)).\]
  Since $\sigma_{\gamma g}$ is the permutation given by the action of $\gamma g$, we see that $\sigma_\gamma\circ \sigma_g=\sigma_{\gamma g}$.  Furthermore, by definition $h_i(\gamma g)$ is the  element of $H$ such that $\gamma g t_i=t_{\sigma_{\gamma g}}h_i(\gamma g)$.  Thus we have equalities 
\begin{align*}
\gamma g t_i &=\gamma t_{\sigma_g(i)}h_i(g)\\
&=t_{\sigma_\gamma(\sigma_g(i))}h_{\sigma_g(i)}(\gamma)h_i(g)\\
&=t_{\sigma_{\gamma g}(i)}h_{\sigma_g(i)}(\gamma)h_i(g).
\end{align*}
which prove that  $\beta$ is functorial.

Commutativity of the triangle 
\[
\xymatrix{ H&\\
B_\n\Sigma_n\times H \ar[u] &B_{G/H}G\ar[l]_{\beta} \ar[ul]_{\kappa}
}
\]
is verified directly from the definitions of $\beta$ and $\kappa$.  Note that the vertical map here is just the projection of $B_\n\Sigma_n$ to the terminal category.
\end{proof}

From Lemma \ref{triangledefns}, we obtain the functors $\kappa^*$ and $\beta^*$ in Diagram (\ref{maindiagram}) by precomposition.  The following lemma tells us how to define the remaining functor of Diagram~(\ref{maindiagram}).

\begin{lemma}\label{defnindexedsmash} There exists an indexed smash product functor $\n^\sma\col \spectra^{B_{\n}\Sigma_n\times H}\to \spectra^{\Sigma_n\wre H}$ whose value on an object $X\in \spectra^{B_\n\Sigma_n\times H}$ is the functor $\n^\sma X\col \Sigma_n\wre H\to \spectra$ defined by the equations
\begin{align*}
(\n^\sma X)(\ast_{\Sigma_n\wre H})&=\bigwedge_{i\in \n}X(i)\\
(\n^\sma X)(\sigma,h_1,\dotsc, h_n)&=\bigwedge_{i\in \n}X(\sigma_i,h_i)\colon \bigwedge_{i\in \n}X(i)\to \bigwedge_{i\in \n}X(\sigma(i)).
\end{align*}
\end{lemma}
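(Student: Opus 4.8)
The plan is to build $\n^\sma$ out of the indexed (unordered) smash product of Appendix \ref{sec:unify-fram-index}, whose defining feature is functoriality for bijections of the indexing set: a bijection $f \col S \to T$ of finite sets together with a family of maps $g_s \col Y_s \to Z_{f(s)}$ determines a map $\bigwedge_{s \in S} Y_s \to \bigwedge_{t \in T} Z_t$, and this assignment respects identities and composition. Granting this, there is nothing to define on objects, since the single object of the one-object groupoid $\Sigma_n \wre H$ is sent to $\bigwedge_{i \in \n} X(i)$; the content of the lemma is to check that the prescribed values assemble into a functor at two levels: that $\n^\sma X$ is a functor $\Sigma_n \wre H \to \spectra$ for each fixed $X$, and that $\n^\sma$ is itself functorial in $X$.

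For the first level, I would read $(\n^\sma X)(\sigma, h_1, \dots, h_n)$ as the indexed smash product induced by the bijection $\sigma \col \n \to \n$ together with the component maps $X(\sigma_i, h_i) \col X(i) \to X(\sigma(i))$ supplied by the functor $X$. The identity element $(\id, e, \dots, e)$ then smashes the identities $X(\id_i, e) = \id_{X(i)}$, giving the identity of $\bigwedge_i X(i)$. Composition is the crux: given $(\sigma, h_1,\dots,h_n)$ and $(\tau, k_1,\dots,k_n)$, I would trace the composite factor by factor. The $i$-th factor passes through $X(\tau_i, k_i) \col X(i) \to X(\tau(i))$ and then through $X(\sigma_{\tau(i)}, h_{\tau(i)}) \col X(\tau(i)) \to X(\sigma\tau(i))$, so by functoriality of $X$, together with the composition law $\sigma_{\tau(i)} \circ \tau_i = (\sigma\tau)_i$ in the translation groupoid $B_\n\Sigma_n$, the composite on the $i$-th factor is $X((\sigma\tau)_i, h_{\tau(i)} k_i)$. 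Compatibility of the indexed smash with composition of bijections then identifies the composite $(\n^\sma X)(\sigma, h_1,\dots,h_n) \circ (\n^\sma X)(\tau, k_1,\dots,k_n)$ with $(\n^\sma X)(\sigma\tau, (h_{\tau(i)} k_i)_i)$. This both verifies functoriality and pins down the multiplication of $\Sigma_n \wre H$ as $(\sigma, h_1,\dots,h_n)(\tau, k_1,\dots,k_n) = (\sigma\tau, (h_{\tau(i)} k_i)_i)$, which is exactly the convention making the homomorphism $\alpha$ of (\ref{wreathhomomorphism}) a group homomorphism, as the computation in Lemma \ref{triangledefns} already records.

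For the second level, a morphism $X \to Y$ in $\spectra^{B_\n\Sigma_n \times H}$ is a natural transformation with components $\phi_i \col X(i) \to Y(i)$, and I would set $\n^\sma \phi$ to be the indexed smash $\bigwedge_i \phi_i \col \bigwedge_i X(i) \to \bigwedge_i Y(i)$, built from the identity bijection on $\n$ and the $\phi_i$. Naturality of $\n^\sma \phi$ over each $(\sigma, h_1,\dots,h_n)$ reduces, factorwise, to the naturality squares of $\phi$ against the maps $X(\sigma_i, h_i)$, and preservation of identities and composites of natural transformations is immediate from the corresponding properties of the indexed smash. Thus $\n^\sma$ is a functor of functor categories with the stated values.

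The one genuine obstacle is coherence: the indexed smash product over an unordered set is only well defined up to the canonical associativity and symmetry isomorphisms of $\spectra$, and the factorwise argument above must be arranged so that these isomorphisms compose on the nose rather than merely up to further coherence data. This is precisely the bookkeeping that Appendix \ref{sec:unify-fram-index} is designed to handle: it supplies a strictly functorial model of the indexed smash product, so that the composition and naturality checks above become the formal identities asserted there, with Mac Lane coherence guaranteeing that no ambiguity survives.
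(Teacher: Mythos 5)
Your proposal is correct and follows essentially the same route as the paper's own proof: the same factorwise composition check using the wreath product multiplication $(\sigma,h_1,\dotsc,h_n)(\tau,k_1,\dotsc,k_n)=(\sigma\tau,h_{\tau(1)}k_1,\dotsc,h_{\tau(n)}k_n)$ together with the composition law $\sigma_{\tau(i)}\circ\tau_i=(\sigma\tau)_i$ in $B_\n\Sigma_n$, and the same componentwise naturality argument for functoriality in $X$. Your handling of the unordered-smash coherence issue by deferring to the appendix's formalism matches the paper's treatment as well (the paper constructs the functor ``by hand'' and disposes of coherence in Remark \ref{yaysymmetry} and Appendix \ref{sec:unify-fram-index}).
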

We postpone the proof of Lemma \ref{defnindexedsmash} until after the proof of Theorem \ref{maintheorem}.

\begin{proof}[Proof of {Theorem \ref{maintheorem}}]
Fix a transversal $\{t_1,\dotsc,t_n\}$ of $H$ in $G$.  By Lemma \ref{triangledefns}, this determines functors $\kappa$ and $\beta$.  This choice also determines the homomorphism $\alpha\col G\to \Sigma_n\wre H$ in Equation (\ref{wreathhomomorphism}).  Precomposition with these maps determines the functors $\kappa^*$, $\beta^*$ and $\alpha^*$ in Diagram~(\ref{maindiagram}).  The commutativity of the triangle in Lemma \ref{triangledefns} thus implies commutativity of the upper triangle in Diagram (\ref{maindiagram}).

We now turn to the lower square in Diagram (\ref{maindiagram}). 
 We have already described the functor $p_*^\sma$ in Equations (\ref{HHRindexedobj}) and (\ref{HHRindexedmorph}), and it makes sense to think of $p_*^\sma$ as a smash product ``indexed over $G/H$.''  Lemma \ref{defnindexedsmash} defines the functor $\n^\sma$, which has the form of a ``smash product indexed over $\n$.''   We explicitly check that the square in question commutes.

Given $X\in \spectra^{B_{\n}\Sigma_n\times H}$, the functor $p_*^\sma(\beta^*X)\in \spectra^G$ is given at the object $\ast_G$ by 
\[p_*^\sma(\beta^* X)(\ast_G)=\bigwedge_{t_iH\in G/H}\beta^*(X)(t_iH)= \bigwedge_{t_iH\in G/H} X(\beta(t_iH))= \bigwedge_{i\in \n} X(i).\]
On the other hand, $\alpha^*(\n^\sma(X))$ is given at $\ast_{G}$ by
\[ \alpha^*(\n^\sma X)(\ast_G)= (\n^\sma X)(\alpha(\ast_G))= (\n^\sma X)(\ast_{\Sigma_n\wre H})\cong \bigwedge_{i\in\n}X(i).\]
Hence $p_*^\sma(\beta^*X)$ and $\alpha^*(\n^\sma X)$ are the same on objects. 

Unraveling the definitions on morphisms, we see that for $g\in G$, the map $p_*^\sma(\beta^*X)(g)$ is 
\[
\xymatrix@C=.3\linewidth{ \disp \bigwedge_{t_iH\in G/H} (\beta^*X)(t_iH)\ar[r]^{\bigwedge (\beta^*X)(g_i)} \ar@{=}[d] & \disp \bigwedge_{t_iH\in G/H}(\beta^* X)(t_{\sigma_g(i)}H) \ar@{=}[d]\\
\disp \bigwedge_{t_iH\in G/H} X(\beta(t_iH))\ar[r]^{\bigwedge X(\beta(g_i))} \ar@{=}[d] &\disp \bigwedge_{t_iH\in G/H}X(\beta(t_{\sigma_g(i)}H))\ar@{=}[d]\\
\disp \bigwedge_{i\in \n}X(i) \ar[r]^{\bigwedge X({\sigma_g}_i,h_i(g))} &\disp \bigwedge_{i\in \n} X(\sigma_g(i)).
}\]
Meanwhile, $(\alpha^*\n^\sma(X))(g)$ is
\[
\xymatrix@C=.3\linewidth{ 
\disp \n^\sma X(\alpha(\ast_G))\ar[r]^{\n^\sma X(\alpha(g))} \ar@{=}[d]& \disp\n^\sma(X \alpha(\ast_G)) \ar@{=}[d]\\
\n^{\sma}X(\ast_{\Sigma_n\wre H})\ar[r]^{\n^\sma X(\sigma_g,h_i(g),\dotsc,h_n(g))} \ar@{=}[d]& \n^\sma X(\ast_{\Sigma_n\wre H}) \ar@{=}[d]\\
\disp \bigwedge_{i\in \n}X(i)\ar[r]^{\bigwedge X({\sigma_g}_i,h_i(g))} & \disp \bigwedge_{i\in \n}X(\sigma_g(i)).
}
\]
Thus we see that the lower square of Diagram (\ref{maindiagram}) commutes, which completes the proof that the entirety of Diagram (\ref{maindiagram}) commutes.  Hence, given choices of homomorphism $\alpha$ and inverse equivalence $\kappa$ coming from a transversal of $H<G$, the two constructions of the norm map agree. 
\end{proof}

\begin{remark}
One way to conceptualize this result is as saying that in order to have norm map, one must fix a transversal and take an indexed smash product, but it doesn't matter in which order one performs these constructions.
\end{remark}

We now turn to the technical business of proving that the functor $\n^\sma$ of Lemma \ref{defnindexedsmash} is well defined.  This functor can be constructed by hand, and we give this construction below.  A categorical description that unifies the construction of $p_*^\sma$ and $\n^\sma$ is given in Appendix \ref{sec:unify-fram-index}.
\begin{proof}[Proof of {Lemma \ref{defnindexedsmash}}]

Given an object $X\in \spectra^{B_{\n}\Sigma_n\times H}$, define $\n^\sma X$ to be the functor $\Sigma_n\wre H\to \spectra$ given by the equations
\begin{align*}
(\n^\sma X)(\ast_{\Sigma_n\wre H})&=\bigwedge_{i\in \n}X(i)\\
(\n^\sma X)(\sigma,h_1,\dotsc, h_n)&=\bigwedge_{i\in \n}X(\sigma_i,h_i)\colon \bigwedge_{i\in \n}X(i)\to \bigwedge_{i\in \n}X(\sigma(i)).
\end{align*}
We must check that this is actually a functor.  Since the product of $(\sigma,h_1,\dotsc,h_n)$ and $(\tau,k_1,\dotsc,k_n)$ in $\Sigma_n\wre H$ is $(\sigma\tau,h_{\tau(1)}k_1,\dotsc,h_{\tau(n)}k_n)$, this amounts to showing that the diagram
\[\xymatrix@C=.3\linewidth{
\disp \bigwedge_{i\in\n}X(i)\ar[r]^{\bigwedge X(\tau_i,k_i)} \ar[dr]_{\bigwedge X(\sigma\tau_i, h_{\tau(i)}k_i)\quad\quad\quad} &\disp \bigwedge_{i\in \n} X(\tau(i))\ar[d]^{\bigwedge X(\sigma_{\tau(i)}, h_{\tau(i)})}\\
&\disp \bigwedge_{i\in \n} X(\sigma(\tau(i)))
}\]
commutes.  This follows from the definition of the morphisms in $B_{\n}\Sigma_n$.  Hence $\n^\sma(X)$ is a well-defined object of $\spectra^{\Sigma_n\wre H}$.

We also need to show that $\n^\sma$ is functorial, ie that if $X\stackrel{f}{\To} Y$ is a natural transformation in $\spectra^{B_{\n}\Sigma_n\times H}$, we have a natural transformation $\n^\sma f\col \n^\sma X\to \n^\sma Y$ in $\spectra^{\Sigma_n\wre H}$.  We define the map $\n^\sma f\col \n^\sma X(\ast_{\Sigma_n\wre H})\to \n^\sma Y(\ast_{\Sigma_n\wre H})$ to be $\bigwedge_{i\in\n} f_i$, where $f_i$ denotes the component of $f$ at the object $i\in B_\n\Sigma_n\times H$.
 Naturality of $\n^{\sma}f$ is then just the commutativity of the following diagram:
\begin{equation}\label{naturality}
\xymatrix@C=.3\linewidth{
\disp
\bigwedge_{i\in \n}X(i) \ar[r]^-{\bigwedge X(\sigma_i, h_i)}\ar[d]_-{\bigwedge f_i} &\disp \bigwedge_{i\in \n} X(\sigma(i))\ar[d]^-{\bigwedge f_{\sigma(i)}}\\
\disp \bigwedge_{i\in \n} Y(i)\ar[r]_-{\bigwedge Y(\sigma_i, h_i)} & \disp\bigwedge_{i\in \n} Y(\sigma(i))
}
\end{equation}
Naturality of $f$ implies the commutativity of this square at any component $i$ of the smash product; the desired commutativity follows.
\end{proof}

\appendix

\section{A unifying framework for indexed tensor products}\label{sec:unify-fram-index}

Let $\mathrm{FinSet}_{\mathrm{\bf iso}}$ denote the category of finite sets and isomorphisms. For any small category $J$, the data of a functor $P \colon J \to \mathrm{FinSet}_{\mathrm{\bf iso}}$ can be encoded by a functor $p \colon I \to J$ of the form described below. Functors $p$ arising in this way are called \emph{finite covering categories}; in the categorical literature, it is more common to say that $p$ is both a \emph{discrete left fibration} and a \emph{discrete right fibration}.  In \ref{finitecovercat}, we give first a concise and then a concrete description of finite covering categories, and explain the Hill--Hopkins--Ravenel construction of the indexed tensor product associated to $p$. In \ref{grothendieck}, we give an alternate description of the Hill--Hopkins--Ravenel construction, implicit in the appendices of \cite{HHR2009}, that makes use of the so-called Grothendieck construction. By appealing to this categorical machinery, the proofs of several of the basic propositions concerning indexed tensor products are automatic. Finally, in \ref{generalindexed}, we show that the hypotheses on the functors $p \colon I \to J$ necessary to construct an indexed tensor product may be relaxed. In so doing, we unify the description of the norms of Greenlees--May and Hill--Hopkins--Ravenel, the main objective of the body of this paper.

\subsection{Indexed tensor products via finite covering categories}\label{finitecovercat}

Write ${\ast}$ for the terminal category with a single object and a single identity arrow and ${\bf 2}$ for the category with two objects $0,1$ and a single non-identity arrow $0 \to 1$.  A \emph{finite covering category} is a functor $p \colon I \rightarrow J$ such that for each commuting square, there are unique lifts
\[\vcenter{\xymatrix{ {\ast} \ar[d]_0 \ar[r]^i & I \ar[d]^p \\ {\bf 2} \ar[r]_f \ar@{-->}[ur]_{\exists !} & J }} \qquad \text{and} \qquad \vcenter{ \xymatrix{ {\ast} \ar[d]_1 \ar[r]^{i'} & I \ar[d]^p \\ {\bf 2} \ar[r]_f \ar@{-->}[ur]_{\exists !} & J}}\]
In other words, for each arrow $f$ of $J$ and specified lift $i$ of its domain, there is a unique arrow of $I$ with domain $i$ lifting $f$, and similarly, for each specified lift $i'$ of its codomain, there is a unique arrow of $I$ with codomain $i'$ lifting $f$.

Unpacking this definition, we see that the domain $I$ of a finite covering category can be described as follows.  Denote by $p^{\inv}(j)$ the fiber over the object $j\in J$, meaning the subcategory of $I$ mapping to the identity on $j$. Each of these fibers is discrete, and if $J$ is connected,
 all the fibers have the same cardinality, which we denote $n$.  This follows from understanding the arrows of $I$.  An arrow $f\col j\to j'$ has $n$ lifts to arrows from an object of $p^\inv(j)$ to an object of $p^\inv(j')$, and this collection of lifts determines a bijection between the fibers.  This correspondence is injective by the unique left lifting and surjective by the unique right lifting of arrows in a covering category.  No non-identity arrows can be in the fiber over identity arrows of $J$; hence the fibers of $p$ are \emph{discrete.}

Composition of arrows in $I$ is defined as in $J$. For example, the data illustrated in the following picture specifies a covering category in which the fibers have cardinality three.
\[\xymatrix{ I & i_1 \ar[dr] & i_1' \ar[r] & i_1'' \\ & i_2 \ar[dr] & i_2' \ar[dr] & i_2'' \\ & i_3 \ar[uur] & i_3' \ar[ur] & i_3'' \\ J & j \ar[r]^f & j' \ar[r]^g & j''}\]

Given a symmetric monoidal category $(\scC,\otimes,\mathbf{1})$ and a finite covering category $p \colon I \rightarrow J$, the \emph{indexed tensor product} or \emph{monoidal pushforward} is a functor $p^{\otimes}_* \colon \scC^I \rightarrow \scC^J$. The image of  $X \colon I \rightarrow \scC$ is the functor $J \to \scC$ defined by 
 \[
\xymatrix{ j \ar[d]_f \ar@{|->}[r] & \displaystyle\bigotimes_{p^{-1}(j)} X(i) \ar[d]^{\otimes X(f_i)}  \\ j' \ar@{|->}[r]  & \displaystyle\bigotimes_{p^{-1}(j')} X(i')}\]

\begin{remark}\label{yaysymmetry} The categorically astute reader may be concerned to note that in the diagram above we have not specified the order in which the tensor products indexed on an (unordered) set are taken. Since we will only consider maps between these ``unordered'' tensor products that arise from simple tensors of maps, as in the right arrow above, the coherence and naturality of the isomorphism $X \otimes Y \cong Y \otimes X$ in our symmetric monoidal category ensures that everything is well-defined, at least up to isomorphism.
\end{remark}

This gives a concrete description of the indexed tensor product associated to a finite covering category. This construction also arises in a more categorical way using the Grothendieck construction, detailed in \ref{grothendieck}.  The covering category set up is crucial for this more categorical description, but is not in general necessary to define indexed tensor products, as explained in \ref{generalindexed}.

\subsection{Indexed tensor products via the Grothendieck construction}\label{grothendieck}

There is a functor \[ \mathrm{\bf FinSet}_{\mathrm{\bf iso}}^J \to \mathrm{\bf Cat}/J\] whose image consists of the finite covering categories and all maps over $J$. The construction of a finite category $p \colon I \to J$ from $P \colon J \to \mathrm{\bf FinSet}_{\mathrm{ \bf iso}}$ is sometimes called the \emph{Grothendieck construction}. 

Given $P$, we obtain a finite covering category $p\col I\to J$ where the fiber $p^{\inv}(j)$ is 
the set $Pj$ thought of as a discrete category.  An arrow $f\colon j\to j'$ has $n$ lifts to $I$ which are determined by the isomorphism $Pf\colon Pj\to Pj'$: namely, each object in $p^{\inv}(j)$ is the source of a unique lift of $f$, and each object in $p^{\inv}(j')$ is the target of a unique lift of $f$ so that the bijection set up by this correspondence is the isomorphism $Pf$ in $\mathrm{\bf FinSet}_{\mathrm{\bf iso}}$. Composition in $I$ is as in $J$, and there are no other morphisms in $I$.

In the literature, there is frequently confusion between this set-based Grothendieck construction and a categorified version that encodes functors $J \to \mathrm{\bf Cat}$ as fibration-like functors $I \to J$, whose fibers are typically not discrete.  Given $P \colon J \to \mathrm{\bf Cat}$, the categorified           
Grothendieck construction returns a category $I$ with the same set of objects: pairs 
$(j \in J, i \in Pj)$. But morphisms now have the form \[(f,h) \colon (j,i) \to (j',i')\] with $f \colon j \to j'$ in $J$ and
such that $h$ is an arrow in the category $Pj'$ from $(Pf)(i)$ to $i'$. 

\begin{ex} Let $\scC$ be any category and consider the constant functor $J \to \mathrm{\bf Cat}$ at $\scC$. The categorified Grothendieck construction returns the category $\scC \times J$ together with projection $\scC \times J \to J$. If we instead apply the set-based Grothendieck construction to the composite $J \to \mathrm{\bf Cat} \to \mathrm{\bf FinSet}_{\mathrm{\bf iso}}$ forgetting the arrows in $\scC$, then the result is $\mathrm{ob}\, \scC \times J \to J$.
\end{ex}

The categorified Grothendieck construction defines a functor \[ \mathrm{\bf Cat}^J \to \mathrm{\bf Cat}/J.\]  The image is the subcategory of \emph{opfibrations} over $J$ and cartesian morphisms. More details can be found in \cite[Volume 1 Section B.1.]{Johnstone2002} or \cite[Sections 2.1.1, 3.2.0]{Lurie2009}. Note that other sources such as \cite{MacMoer1992} only discuss the set-based version.

To construct the indexed tensor product, fix a symmetric monoidal category $(\scC, \otimes, \mathrm{\bf 1})$. Consider the following two functors $\mathrm{\bf FinSet}_{\mathrm{\bf iso}} \to \mathrm{\bf Cat}$:  the functor $I \mapsto \scC^I$ and the constant functor at $\scC$. When $\scC$ is symmetric monoidal, there is a natural transformation $\otimes^{-}$ from the former to the latter whose component     
at $I$ is the functor \[\otimes^I \colon \scC^I \to \scC.\]
Given a finite covering category $p$ encoded as a functor $P \colon J \to \mathrm{\bf FinSet}_{\mathrm{\bf iso}}$, we precompose the above natural transformation with $P$ to obtain a natural transformation whose target is constant functor at $\scC$. Applying the categorified Grothendieck construction to the morphism
\[\xymatrix@C=60pt@!0{J \ar@/^2.9ex/[r]^{j \mapsto \scC^{Pj}} \ar@/_2.9ex/[r]_-{\scC} \ar@{}[r]|{\big\Downarrow \otimes^{-}} & \mathrm{\bf Cat}}\]
 we obtain functors \[ \xymatrix{ \scC_P \ar[rr] \ar[dr] & & \scC \times J \ar[dl] \\ & J  }\] 
It follows from the preceding description of the Grothendieck construction that the category of sections for the opfibration $\scC_P \to J$ is $\scC^I$. It is even easier to see that the category of sections for $\scC \times J \to J$ is $\scC^J$. Tracing through the definitions shows that the induced functor $\scC^I\to \scC^J$ between the categories of sections is the indexed tensor product $p^\otimes_*$. See the appendices to \cite{HHR2009} for further details.

\subsection{Indexed tensor products in general}\label{generalindexed}

Suppose we're given a connected category $J$ and an arbitrary category $I$. To construct an indexed tensor product $\scC^I \rightarrow \scC^J$ of ``dimension'' $n$ (meaning the objects in the image of the functors $J \rightarrow \scC$ are all $n$-fold tensors in $\scC$), we need the following data:
\begin{itemize} 
\item for each $j \in J$, a set $P_j$ of $n$ distinct objects of $I$ 
\item for each $f \colon j \rightarrow j'$, a set $P_f$ of $n$ (necessarily distinct) arrows of $I$ specifying a ``directed bijection'' between $P_j$ and $P_{j'}$, ie each object of $P_j$ and $P_{j'}$ is respectively the domain or codomain of a unique arrow in this collection 
\item the sets $P_f$ should depend functorially on $J$, so that $P_{gf}$ is the set consisting of composites of the arrows in $P_g$ and $P_f$. By our condition on the object sets, there is only one possible way in which these composites can be defined. \end{itemize}

When $p \colon I \rightarrow J$ is a finite covering category, these sets are defined by the functor $P \colon J \to \mathrm{\bf FinSet}_{\mathrm{\bf iso}}$, or alternatively by taking lifts of objects and arrows of $J$ along $p$. But the point is that the category $I$ could in principle be much larger, having more objects and/or arrows. In such cases, it is unlikely that there will be a functor $I \rightarrow J$, much less a covering category. More surprisingly, $I$ can also be too small to be a covering category, as is the case in the construction of the functor $\n^{\sma}$ in Lemma \ref{defnindexedsmash}.

In general, the data needed for an indexed tensor product $\scC^I \to \scC^J$ comes from a functor of the form described below. Let $I^n$ denote the $n$-fold cartesian product of $I$ and let $I^n \backslash \Delta$ be the full subcategory on the complement of the fat diagonal. That is, objects of $I^n \backslash \Delta$ are ordered $n$-tuples of distinct objects of $I$ and morphisms are ordered $n$-tuples of morphisms, necessarily distinct as well. The group $\Sigma_n$ acts naturally on $I^n$ and this action descends to $I^n \backslash \Delta$. Write $(I^n \backslash \Delta) / \Sigma_n$ for the quotient in $\mathbf{Cat}$ of $I^n \backslash \Delta$ by this action.

\begin{defn} 
Given categories $J$ and $I$ equipped with a functor \[ P \colon J \rightarrow (I^n \backslash \Delta )/ \Sigma_n\] the \emph{indexed tensor product} is the functor $P^\otimes \colon \scC^I \to \scC^J$ defined as follows: An object $X \in \scC^I$ maps to the functor $J \to \scC$ defined by 
 \[
\xymatrix{ j \ar[d]_f \ar@{|->}[r] & \displaystyle\bigotimes_{i \in Pj} X(i) \ar[d]^{\displaystyle\bigotimes_{h \in Pf} X(h)}  \\ j' \ar@{|->}[r]  & \displaystyle\bigotimes_{i' \in P{j'}} X(i')}\]
Functoriality of $P$ implies that $P^\otimes(X)$ is an object of $\scC^J$; the verification that this assignment is functorial in $X$ is precisely that of Equation (\ref{naturality}).
\end{defn}

As in Remark~\ref{yaysymmetry},  we only define the functors in $\scC^J$ up to natural isomorphism. A strict definition requires a choice of ordering for the elements in each of our index sets. This is equivalent to specifying a lift of the functor $P$ through the canonical quotient arrow $I^n \backslash \Delta \rightarrow (I^n \backslash \Delta)/\Sigma_n$ from the colimit cone. 

\begin{ex} Any finite covering category $I \to J$ over a connected category $J$ with fibers of cardinality $n$ defines such a $P \colon J \to (I^n \backslash \Delta)/\Sigma_n$ by sending an object $j$ to the (unordered) set of its preimages. In particular, the construction $p^\wedge_*$ of Equations (\ref{HHRindexedobj}) and (\ref{HHRindexedmorph}),and more generally, the monoidal pushforward of \cite{HHR2009}, is an example of the indexed tensor product as just defined.
\end{ex}

\begin{ex} The functor $\n^\wedge$ of Lemma \ref{defnindexedsmash} is an indexed tensor product as well. In this case, the functor $P \colon \Sigma_n \wre H \to (B_\n\Sigma_n \times H)^n \backslash \Delta$ is given by \begin{align*} P(\ast_{\Sigma_n \wre H}) &= \{1,\ldots, n\} \\ P(\sigma, h_1,\ldots, h_n) &= \{ (\sigma_1,h_1),\ldots, (\sigma_n,h_n)\}. \end{align*} This example cannot arise from a covering category because there are simply not enough morphisms in $B_\n\Sigma_n$ to cover all the morphisms in $\Sigma_n \wre H$. In particular, suppose  $(\sigma, h_1,\ldots, h_n)$ and $(\sigma, h_1',\ldots, h_n')$ are distinct in $\Sigma_n \wre H$. If $h_i = h_i' = h$ for some $i$, then the morphism $(\sigma_i,h)$ covers them both.
\end{ex}

\bibliography{generalrefs}
\bibliographystyle{amsplain}

\end{document}